\newcommand{\defterm}[1]{\emph{#1}}
\theoremstyle{plain}
	\newtheorem{thm}{Theorem}[section]
	\newtheorem*{thm*}{Theorem}
	\newtheorem{cor}[thm]{Corollary}
	\newtheorem*{cor*}{Corollary}
	\newtheorem*{prop*}{Proposition}
	\newtheorem{lem}[thm]{Lemma}
	\newtheorem*{lem*}{Lemma}
	\newtheorem*{ex*}{Exercise}
	\newtheorem*{claim*}{Claim}
	\newtheorem{question}[thm]{Question}
	\newtheorem*{question*}{Question}
	\newtheorem{fact}[thm]{Fact}
	\newtheorem*{fact*}{Fact}
\theoremstyle{definition}
	\newtheorem{Def}[thm]{Definition}
	\newtheorem*{Def*}{Definition}
	\newtheorem*{obs*}{Observation}
	\newtheorem{rmk}[thm]{Remark}
	\newtheorem*{rmk*}{Remark}
	\newtheorem{soln*}{Solution}
	\newtheorem*{note*}{Note}
	\newtheorem*{eg*}{Example}	
	\newtheorem*{construction*}{Construction}
	\newtheorem*{warning*}{Warning}
	\newtheorem*{conj*}{Conjecture}
\newcommand{\ints}{\mathbb{Z}}
\newcommand{\rats}{\mathbb{Q}}
\newcommand{\cpx}{\mathbb{C}}
\newcommand{\nats}{\mathbb{N}}
\newcommand{\id}{\mathrm{id}}
\newcommand{\Map}{\mathrm{Map}}
\newcommand{\Psh}{\mathsf{Psh}}
\newcommand{\Fun}{\mathrm{Fun}}
\DeclareMathOperator{\Ind}{Ind}
\newcommand{\Mod}{\mathsf{Mod}}
\newcommand{\calC}{\mathcal{C}}
\newcommand{\calL}{\mathcal{L}}
\newcommand{\Cyc}{\mathsf{Cyc}}
\newcommand{\fin}{\mathrm{fin}}
\newcommand{\catdeloop}{B}
\newcommand{\Paracyc}{\mathsf{PCyc}}
\newcommand{\inj}{\mathrm{inj}}
\newcommand{\pjv}{\mathbb{P}}
\newcommand{\DCyc}{\mathsf{DCyc}}
\newcommand{\Pos}{\mathsf{Pos}}
\begin{document}

\title{Non-trivial higher homotopy of first-order theories}
\author[T. Campion]{Tim Campion}

\author[J. Ye]{Jinhe Ye}

\date{\today}

\address{Department of Mathematics, John Hopkins University}
\email{tcampio1@jh.edu}

\address{Mathematical Institute, University of Oxford}
\email{jinhe.ye@maths.ox.ac.uk}
\begin{abstract}
    Let $T$ be the theory of dense cyclically ordered sets with at least two elements. We determine the classifying space of $\Mod(T)$ to be homotopically equivalent to $\mathbb{CP}^\infty$. In particular, $\pi_2(\lvert\Mod(T)\rvert)=\mathbb{Z}$, which answers a question in our previous work. The computation is based on Connes' cycle category $\Lambda$.
\end{abstract}

\maketitle

\tableofcontents

\section{Introduction}
In the previous work of the authors with Cousins~\cite{lascar-homotopy}, it was shown that given a complete first-order theory $T$, the fundamental group of the classifying space of $\Mod(T)$, the category of models of $T$ with elementary embeddings, is exactly the Lascar group. The Lascar group was introduced as a model-theoretic Galois group~\cite{lascar}. Other topological invariants of the classifying space $|\Mod(T)|$ beyond the fundamental group appear to be worthy of exploration. In all the examples appearing in~\cite{lascar-homotopy}, the homotopy groups in degrees 2 and above were observed to vanish. By relaxing the restriction to models of theories in other more expressive logics, for example, abstract elementary classes (AECs), arbitrary homotopy types can be realized, even for AECs with nice model-theoretic properties~\cite{higher-aec}. This motivates the following question:
\begin{question}\label{question}
Is it true that for every first-order theory $T$, $\pi_2(\lvert\Mod(T)\rvert)$ vanishes? If not, what is the model-theoretic content of it?
\end{question}
In this short note, we answer the first part of \cref{question} in the negative. Specifically, for $T$ the theory of dense cyclically ordered sets with at least two elements, $\lvert\Mod(T)\rvert\simeq \mathbb{CP}^\infty$. In particular, $\pi_2(\lvert \Mod(T) \rvert) = \ints \neq 0$. The computation is based on Connes' cycle category $\Lambda$~\cite{connes} and the work of Nikolaus and Scholze~\cite{nikolaus-scholze}. The theory of dense cyclically ordered sets is a canonical example in model theory where existence of non-forking extensions fails over $\varnothing$.  In forthcoming work, we will show that for $T$ stable, $\pi_2(\lvert\Mod(T)\rvert)$ is always trivial. Stability can be characterized axiomatically in terms of forking independence (see for example~\cite{harrington,kim-pillay}), and clever usage of these independence axioms will enable us to trivialize the second homotopy group. This seems to suggest that $\pi_2(\lvert\Mod(T)\rvert)$ is related to certain aspects of forking/dividing. However, the connection is still rather mysterious, see~\cref{rmk:boolean}. For an introduction to forking and dividing, see~\cite[Chapter 7]{tent-ziegler}.

The contents of this note are as follows. In \cref{sec:cyc}, we introduce the category of \emph{cyclically ordered sets}, and compute its homotopy type following \cite{nikolaus-scholze}. In \cref{sec:dense}, we introduce the theory of \emph{ dense cyclically ordered sets} and its category of models $\DCyc$. We show that it is homotopy equivalent to the category of cyclically ordered sets. Lastly, for a first-order theory $T$, we have a brief discussion of $\pi_2|\Mod(T)|$ with forking and dividing in model theory.
\paragraph{\bf{Acknowledgement.}} We would like to express our gratitude to the anonymous referee for feedback and suggestions for improvement. TC is grateful for the support of the ARO under MURI Grant W911NF-20-1-0082.

\section{Nonempty cyclically ordered sets}\label{sec:cyc}

In this section, we introduce the category of \emph{cyclically ordered sets} (\cref{def:cycord}), and compute the homotopy type of the category thereof (\cref{cor:cyc-bs1-bis}).

\subsection{Cyclically ordered sets}

We introduce cyclically ordered sets from a model-theoretic perspective (\cref{def:cycord}) and show that this category may alternatively be defined via an Ind construction (\cref{lem:acc}).

\begin{Def}\label{def:cycord}
The \defterm{language of cyclically ordered sets} $
\mathcal{L}_\Cyc$ is the language with a ternary relation symbol $R(x,y,z)$. A \defterm{cyclically ordered set} is an $\mathcal{L}_\Cyc$-structure $(S,R(x,y,z))$ such that the following holds:
\begin{enumerate}[align=left]
    \item [Asymmetry:] $\forall x,y,z,w$ $R(x,y,z)\wedge R(x,z,w)\rightarrow y\neq w$;
    \item [Transitivity:] $\forall x,y,z,w$ $R(x,y,z)
    \wedge R(x,z,w) \rightarrow R(x,y,w)$;
    \item [Connectedness:] For all distinct $x,y,z$, $R(x,y,z)$ or $R(z,y,x)$;
    \item [Cyclicity:] $\forall x,y,z$ $R(x,y,z)\rightarrow R(y,z,x)$.
\end{enumerate}

An \defterm{embedding of cyclically ordered sets} (or in short, embedding) is an $\mathcal{L}_\Cyc$-embedding. Namely $f:C\to D$ is an embedding of cyclically ordered sets if $f$ is injective and $R_C(a,b,c)\leftrightarrow R_D(f(a),f(b),f(c))$. Note that the preservation of $R$ implies $f$ is injective once $C$ has at least $3$ elements. We introduce notation for the following categories of cyclically ordered sets:
\begin{itemize}
    \item $\Cyc$ is the category of cyclically ordered sets and embeddings between them.
    \item $\Cyc_{>0} \subset \Cyc$ is the full subcategory of nonempty cyclically ordered sets.
    \item $\Cyc^\fin_{>0} \subset \Cyc_{>0}$ is the full subcategory of finite, nonempty cyclically ordered sets.
\end{itemize}
\end{Def}
\begin{rmk}
    It is worth pointing out that the axioms other than Cyclicity for a cyclically ordered set states that $R(x,y,z)$ is a strict linear ordering on the complement of $x$ for each $x$.
\end{rmk}

\begin{lem}\label{lem:acc}
$\Cyc_{>0}$ is an $\aleph_0$-accessible category. The finitely-presentable objects are the finite cyclically ordered sets. Thus we have an equivalence $\Cyc_{>0} \simeq \Ind(\Cyc^\fin_{>0})$. 
\end{lem}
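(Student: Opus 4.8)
The plan is to reduce the statement to the standard characterization of finitely accessible categories: a category is $\aleph_0$-accessible precisely when it has filtered colimits together with a small set of finitely presentable objects of which every object is a filtered colimit. I would therefore establish three ingredients directly, exploiting throughout that all four axioms in \cref{def:cycord} are \emph{universal} sentences (even Connectedness, whose disjunction lives in the conclusion of a $\forall$-statement). The universality guarantees both closure under substructures and preservation under the relevant colimits, which is what makes the whole argument run.

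First I would show that the forgetful functor $\Cyc_{>0} \to \Set$ creates filtered colimits. Given a filtered diagram of embeddings, form the colimit $S$ of underlying sets; since filtered colimits in $\Set$ preserve injections and a filtered index category is nonempty, each structure map into $S$ is injective and $S$ is nonempty. Any finite tuple of elements of $S$ lifts to a single member $C_i$ of the diagram, so the relation $R$ transports unambiguously to $S$, and because every axiom is universal and every finite tuple is witnessed in some $C_i$, the structure $(S,R)$ again lies in $\Cyc_{>0}$ and enjoys the required universal property. I expect this to be the main obstacle: the argument must track that morphisms are \emph{embeddings} rather than arbitrary homomorphisms, so reflection of $R$ (not merely preservation) has to be verified both when transporting the relation to $S$ and when checking that the induced maps $C_i \to S$, and the comparison map out of $S$, are genuine embeddings.

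Next I would pin down the finitely presentable objects as exactly the finite structures. For finite $C$, finiteness of its underlying set forces $\Hom(C,-)$ to commute with filtered colimits: an embedding $C \to S$ has finite image, which lifts to some $C_i$ and yields a factorization (surjectivity of the comparison map), while two maps into a common $C_i$ that agree in $S$ already agree at some later stage (injectivity); both steps use finiteness of $C$ to pass to a single upper bound in the diagram. Conversely, every $S \in \Cyc_{>0}$ is the filtered colimit of its nonempty finite substructures, which form a directed poset under inclusion with colimit $S$ by the previous step; if $S$ is finitely presentable then $\id_S$ factors through one such finite substructure, and since the corresponding inclusion is both a split epimorphism and an embedding it is an isomorphism, so $S$ is finite.

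Assembling these, $\Cyc_{>0}$ has filtered colimits, its finitely presentable objects are precisely the finite cyclically ordered sets, and these objects are dense; moreover $\Cyc^\fin_{>0}$ is essentially small (finitely many isomorphism types in each cardinality). Hence $\Cyc_{>0}$ is $\aleph_0$-accessible. The equivalence $\Cyc_{>0} \simeq \Ind(\Cyc^\fin_{>0})$ is then the standard presentation of a finitely accessible category as the free filtered cocompletion of its finitely presentable objects (as in the theory of accessible categories of Ad\'amek and Rosick\'y): the inclusion $\Cyc^\fin_{>0} \hookrightarrow \Cyc_{>0}$ induces a functor out of $\Ind(\Cyc^\fin_{>0})$ that is fully faithful because finite structures are finitely presentable, and essentially surjective by the density just established.
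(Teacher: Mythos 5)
Your proposal is correct and follows essentially the same route as the paper, which simply observes that directed colimits in $\Cyc_{>0}$ are unions, that finite nonempty cyclically ordered sets are therefore finitely presentable, and that every object is the directed union of its finite substructures. You merely spell out the details the paper leaves implicit (creation of filtered colimits via the universal axioms, the split-epi argument for the converse characterization of finitely presentable objects, and the standard $\Ind$-completion equivalence), all of which check out.
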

\begin{proof}
It is clear that directed colimits are given by taking unions. Thus the finite nonempty cyclically ordered sets are finitely-presentable. As every nonempty cyclically ordered set is the (directed) union of its finite cyclically ordered subsets, the claim follows.
\end{proof}

\begin{rmk}
The above mentioned categories $\Cyc_{>0}$ and $\Cyc^\fin_{>0}$ are closely related to the \emph{cyclic sets} of \cite{connes}, but there are some differences. The first difference is the site of definition: Connes' cycle category $\Lambda$ is a Reedy category with both ``monic" and ``epic" morphisms; our category $\Cyc_{>0}^\fin \subset \Lambda$ contains only the monomorphisms, as is more natural in the setting of model theory. The other main difference is that Connes' category of cyclic sets is a presheaf category $\Psh(\Lambda)$, whereas our category of cyclically ordered sets is just the Ind-category $\Cyc_{>0} = \Ind(\Cyc_{>0}^\fin) \subset \Psh(\Cyc_{>0}^\fin)$ (\cref{lem:acc}).

Following this observation, we denote by $[n-1]$ the cyclically ordered set with $n$ elements.
\end{rmk}

\subsection{Finite cyclically ordered sets and paracycles}\label{subsec:paracyclic}

Recall (cf. \cite[Appendix B]{nikolaus-scholze}) that Connes' cycle category $\Lambda$ may be defined as the quotient of the paracycle category (variously denoted $\mathsf L$ or $\Lambda_\infty$) by a free categorical action of the categorical group $\catdeloop \ints$. In this subsection, we check that $\Cyc_{>0}^\fin$ is likewise the quotient of the category $\Paracyc^\fin_{>0}$ of monomorphisms in $\Lambda_\infty$ by the restriction of this categorical action.

\begin{rmk}
    For us, a \defterm{categorical group} $G$ is a strict group object in the category of categories. An \defterm{action} of a categorical group $G$ on a category $C$ is meant in the strict sense -- a strictly unital and associative functor $G \times C \to C$.
    
    We let $B\ints$ denote the categorical delooping of the integers. This is the category with one object, with morphism set $\ints$, and composition given by addition of integers. This category is a categorical group, i.e. a strict group object in the category of categories, in a unique way. As in \cite[Appendix B]{nikolaus-scholze}, a $\catdeloop \ints$ action on $C$ amounts to a $\ints$ action on the homsets of $C$. 
\end{rmk}

\begin{Def}[\cite{cisinski,nikolaus-scholze}]\label{def:paracyc}
A \defterm{graded poset} is a poset equipped with an automorphism. A \defterm{paracycle} is a graded poset of the form $(1/n)\ints$ for some $n \geq 1$, where the automorphism is given by $x \mapsto x + 1$. We denote by $\Paracyc^\fin_{>0}$ the category of paracycles and poset embeddings which respect the automorphism. 

The category of graded posets admits an action by the categorical group $\catdeloop \ints$, which adds the shift automorphism to any morphism. This restricts to a free action on $\Paracyc^\fin_{>0}$. There is a canonical functor $\Paracyc^\fin_{>0} \to \Cyc^\fin_{>0}$ sending $(1/n)\ints$ to $[n-1]$.
\end{Def}

\begin{rmk}
The \defterm{paracycle category}, denoted $\mathsf{L}$ by \cite{cisinski} and $\Lambda_\infty$ by \cite{nikolaus-scholze}, is the category with the same objects as $\Paracyc^\fin_{>0}$ but with morphisms \emph{all} order-preserving, automorphism-respecting maps. Connes' cycle category $\Lambda$ is then defined to be the quotient of $\Lambda_\infty$ by the canonical $\catdeloop \ints$-action.
\end{rmk}

\begin{rmk}
Similarly to the case of $\Lambda_\infty$ (\cite[Appendix B]{nikolaus-scholze}) $\Paracyc^\fin_{>0}$ is a full subcategory of the functor category $\Fun(\catdeloop \ints, \Pos)$, where now $\Pos$ is the category of posets and embeddings between them. As in that case, the action of $\catdeloop \ints$ on $\Paracyc^\fin_{>0}$ is inherited from the action of the categorical group $\catdeloop \ints$ on itself.
\end{rmk}

Just as the functor $\Lambda_\infty \to \Lambda$ exhibits $\Lambda$ as the quotient of $\Lambda_\infty$ by the action of $\catdeloop \ints$, the same is true of the functor $\Paracyc^\fin_{>0} \to \Cyc^\fin_{>0}$:

\begin{lem}\label{lem:horb}
The canonical functor $\Paracyc^\fin_{>0} \to \Cyc^\fin_{>0}$ exhibits $\Cyc^\fin_{>0}$ as the quotient of $\Paracyc^\fin_{>0}$ by the $\catdeloop \ints$ action. This quotient is preserved by the nerve functor. 
\end{lem}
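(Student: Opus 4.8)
The plan is to unwind the meaning of the quotient and reduce the first assertion to a bijection on hom-sets. By the convention recalled before \cref{def:paracyc}, the $\catdeloop\ints$-action on $\Paracyc^\fin_{>0}$ is the $\ints$-action on hom-sets sending an automorphism-respecting embedding $f$ to $\sigma^n\circ f$, where $\sigma$ denotes the grading automorphism of the target; since $f$ respects the grading this equals $f\circ\sigma^n$. The strict quotient $\Paracyc^\fin_{>0}/\catdeloop\ints$ therefore has the same objects and hom-sets $\Hom((1/m)\ints,(1/n)\ints)/\ints$; composition descends because $(\sigma^a g)\circ(\sigma^b f)=\sigma^{a+b}(g\circ f)$, using that $g$ respects the grading, so the quotient is a genuine category. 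Observing that the grading automorphism of $(1/n)\ints$ descends to the identity on the cyclic quotient $(1/n)\ints/\ints\cong[n-1]$, the canonical functor is constant on $\ints$-orbits and hence factors as $\Paracyc^\fin_{>0}\to\Paracyc^\fin_{>0}/\catdeloop\ints\to\Cyc^\fin_{>0}$. The first assertion is then exactly the statement that this induced functor is an isomorphism of categories.

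The induced functor is a bijection on objects, so everything comes down to showing that for each $m,n$ the map $\Hom_{\Paracyc^\fin_{>0}}((1/m)\ints,(1/n)\ints)/\ints\to\Hom_{\Cyc^\fin_{>0}}([m-1],[n-1])$ is a bijection; this is the heart of the matter. I would identify $[n-1]$ with the $n$ points of $(1/n)\ints/\ints$ on the circle $\reals/\ints$, carrying the evident cyclic order. For surjectivity, given an embedding $g\colon[m-1]\hookrightarrow[n-1]$, choose any lift $a_0\in(1/n)\ints$ of the image of the basepoint and send the representatives $0<1/m<\dots<(m-1)/m$ of a fundamental domain to the unique lifts of their $g$-images lying in the half-open unit window $[a_0,a_0+1)$; preservation of the cyclic order by $g$ guarantees these lifts are strictly increasing, and extending $\ints$-equivariantly produces a grading-respecting order-embedding over $g$. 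For injectivity, the key principle is that a lift is \emph{uniquely} determined by the grading-respecting and order-embedding conditions once the image $a_0$ of the basepoint is fixed, because all values must then lie in one period and each is the unique representative of its prescribed cyclic image in the window $(a_0,a_0+1)$. Hence two lifts of the same $g$ differ only by the choice of $a_0$ within a single $\ints$-coset, i.e.\ by a shift, so the fibers of the map are exactly the $\ints$-orbits. As a sanity check, both sides have cardinality $m\binom{n}{m}=n\binom{n-1}{m-1}$.

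For the second assertion I would argue level-wise. Since the nerve preserves products, the action becomes an action of the simplicial group $N(\catdeloop\ints)$ on $N(\Paracyc^\fin_{>0})$, with $N(\catdeloop\ints)_k=\ints^k$ acting on a composable $k$-tuple $(f_1,\dots,f_k)$ by $(n_1,\dots,n_k)\cdot(f_1,\dots,f_k)=(\sigma^{n_1}f_1,\dots,\sigma^{n_k}f_k)$; this follows by tracing the action functor through chains. Because the $\ints$-action on each hom-set is free (a nontrivial shift moves every embedding within an infinite target), this $\ints^k$-action is free, and the orbit set $N(\Paracyc^\fin_{>0})_k/\ints^k$ is computed by independently quotienting each morphism. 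Matching representatives shows this orbit set is precisely the set of composable $k$-tuples in the quotient category, i.e.\ $N(\Cyc^\fin_{>0})_k$, compatibly with all faces and degeneracies. Thus $N(\Cyc^\fin_{>0})\cong N(\Paracyc^\fin_{>0})/N(\catdeloop\ints)$, which is the assertion that the nerve preserves the quotient.

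I expect the hom-set bijection of the second paragraph to be the main obstacle: surjectivity and, especially, the uniqueness-of-lift argument underlying injectivity require genuine bookkeeping with the cyclic order and its lifts to a unit window, whereas the descent of composition and the level-wise nerve computation are formal once freeness of the action is noted. An alternative, more referential route would be to invoke the analogous quotient $\Lambda_\infty\to\Lambda$ from \cite[Appendix B]{nikolaus-scholze} together with the fact that both the action and the quotient functor preserve monomorphisms, and then restrict to the monomorphism subcategories $\Paracyc^\fin_{>0}\subset\Lambda_\infty$ and $\Cyc^\fin_{>0}\subset\Lambda$; I would keep the direct argument above as the primary one since it is self-contained.
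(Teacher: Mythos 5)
Your proposal is correct and takes essentially the same route as the paper's proof: both reduce the first assertion to the hom-set-wise description of the quotient by the $\catdeloop\ints$-action, verify that the map $\Map_{\Paracyc^\fin_{>0}}((1/m)\ints,(1/n)\ints) \to \Map_{\Cyc^\fin_{>0}}([m-1],[n-1])$ is surjective with fibers exactly the shift orbits, and deduce nerve preservation from the quotient functor being bijective on objects and full. Your write-up simply supplies more detail (the unit-window lifting argument for the hom-set bijection and the level-wise simplicial computation) at steps the paper treats as clear.
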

\begin{proof}
As in \cite[Appendix B]{nikolaus-scholze}, the $\catdeloop \ints$ action on $\Paracyc^\fin_{>0}$ amounts to a $\ints$ action on the homsets of $\Paracyc^\fin_{>0}$, given by applying the shift map of the source (equivalently, of the target) object. Moreover, the quotient by the $\catdeloop \ints$ action is given by quotienting each homset by this $\ints$ action. The map $\Map_{\Paracyc^\fin_{>0}}((1/m)\ints, (1/n)\ints) \to \Map_{\Cyc^\fin_{>0}}([m-1], [n-1])$ is clearly surjective. Moreover, two morphisms $f,g$ are identified under this map if and only if they differ by a shift. Thus this map quotients by the $\ints$ action as desired. As a quotient which is the identity on objects and full, this quotient is preserved by the nerve functor.
\end{proof}

\begin{rmk}
We might enlarge the category $\Cyc_{>0}^\fin$ to allow for certain non-injective maps. One way to do this would be to allow for \emph{monotone} maps, i.e. $f:C\to D$ such that $R_D(f(x), f(y), f(z))\rightarrow R_C(x,y,z)$. However, this would \emph{not} result in a category equivalent to Connes' category $\Lambda$. For example, there is a unique monotone map $[n] \to [0]$ for each finite cyclically ordered set $[n]$, but in $\Lambda$ there are $n+1$ distinct morphisms $[n] \to [0]$. It is only the monomorphisms of $\Lambda$ which are well-described as maps of structures.
\end{rmk}

\subsection{The homotopy type of nonempty cyclic sets}

In this subsection, we show that the homotopy types of $\Cyc_{>0}^\fin$ and $\Cyc_{>0}$ are $BS^1 = \cpx \pjv^\infty$. The idea is to show that the homotopy type of $\Paracyc_{>0}^\fin$ is contractible and use that $\Cyc_{>0}^\fin$ is the (homotopy) orbits of the latter by the free action of $S^1 = B \ints$ from \cref{def:paracyc}. This method of proof is well-known (\cite{cisinski}, \cite{nikolaus-scholze}) in the case of $\Lambda$ and $\Lambda_\infty$; we verify here that the same proof works when we restrict to categories of monomorphisms.

We begin by showing that $\Paracyc_{>0}^\fin$ is contractible by comparing it to the semisimplex category $\Delta^\inj$ of nonempty finite linear orders and embeddings between them. The analog of the following lemma for the inclusion $\Delta \to \Lambda_\infty$ of the simplex category into the paracycle category is well-known. The proof for the categories of monomorphisms is the same (although there is an error in the proof of \cite[Theorem B.3]{nikolaus-scholze}, since the intersection $\calC_{[t,t+1)} \cap \calC_{[s,s+1)}$ is often empty):

\begin{lem}[{\cite[8.5.15]{cisinski},\cite[Theorem B.3]{nikolaus-scholze}}]\label{lem:delta-paracyc}
The canonical functor $\Delta^\inj \to \Paracyc^\fin_{>0}$ is homotopy initial (i.e. it satisfies the dual hypotheses of Quillen's Theorem A) and in particular induces a homotopy equivalence of classifying spaces.
\end{lem}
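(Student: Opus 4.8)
The plan is to verify the hypotheses of the dual form of Quillen's Theorem~A directly. Writing $j\colon \Delta^\inj \to \Paracyc^\fin_{>0}$ for the canonical functor, it suffices to show that for every paracycle $P = (1/n)\ints$ the comma category $j \downarrow P$ has weakly contractible nerve. First I would make this comma category completely explicit. An object is a pair $(L,\phi)$ with $L$ a finite nonempty linear order, say on $m$ elements, and $\phi\colon j(L) = (1/m)\ints \to (1/n)\ints$ a morphism of paracycles. Since $\phi$ is an order-embedding commuting with the shift $x \mapsto x+1$, it is determined by its values on one fundamental domain, i.e.\ by the finite subset $A_\phi := \{\phi(0), \phi(1/m), \dots, \phi((m-1)/m)\} \subseteq (1/n)\ints$, which is a nonempty set of diameter strictly less than $1$ (equivalently, contained in a run of $n$ consecutive elements). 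One checks that $(L,\phi)\mapsto A_\phi$ is a bijection on objects, and that a morphism $(L,\phi)\to(L',\phi')$ exists precisely when $A_\phi \subseteq A_{\phi'}$, and is then unique. Thus $j\downarrow P$ is isomorphic to the poset $\mathcal{P}_n$ of nonempty subsets of $(1/n)\ints$ of diameter $<1$, ordered by inclusion.

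Next I would reorganize $\mathcal{P}_n$ geometrically: it is the face poset of the simplicial complex $K_n$ whose facets are the windows $\sigma_j = \{j/n, (j+1)/n, \dots, (j+n-1)/n\}$ of $n$ consecutive elements, so that $\lvert j \downarrow P\rvert \simeq \lvert K_n\rvert$. Following the strategy of \cite{cisinski,nikolaus-scholze} one wants to contract $K_n$ using the cover by the windows $\calC_{[t,t+1)}$, each of which is contractible since it has the full window as a top element. The main obstacle is exactly the point flagged before the statement: this cover is not good in the naive sense, because $\calC_{[t,t+1)} \cap \calC_{[s,s+1)}$ is empty as soon as the two windows are disjoint, so one cannot conclude contractibility by a Mayer--Vietoris argument that assumes all pairwise intersections are contractible.

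To get around this I would exploit the linear, rather than trivial, combinatorics of the cover. The windows are linearly ordered, consecutive windows $\sigma_j, \sigma_{j+1}$ meet in the contractible face on their $n-1$ common elements, and every nonempty finite intersection of windows is again a face of $K_n$, hence contractible; so the nerve of the cover is not a point but is homotopy equivalent to the real line, and the nerve lemma then yields $\lvert K_n\rvert \simeq \reals \simeq *$. Concretely this can be packaged as a telescope: exhaust $K_n$ by the finite subcomplexes $K_n^{[a,b]} = \bigcup_{a \le j \le b}\sigma_j$; each is contractible by induction on $b-a$, since $K_n^{[a,b]}$ is obtained from the contractible $K_n^{[a,b-1]}$ by gluing the simplex $\sigma_b$ along the nonempty face $\sigma_{b-1}\cap\sigma_b$, and a homotopy pushout of contractible spaces along a contractible subspace is contractible; passing to the filtered colimit along these closed inclusions preserves contractibility. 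Either way each comma category $j\downarrow P$ is weakly contractible, so $j$ is homotopy initial and in particular a homotopy equivalence on classifying spaces. I expect the only genuine work, and the place where care is needed relative to \cite{nikolaus-scholze}, to be precisely this replacement of the false claim of a point-like nerve of the cover by the correct line-like one.
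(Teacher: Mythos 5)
Your proposal is, in substance, the paper's own proof, repackaged in cleaner combinatorial language. After your identification of the comma category $j \downarrow (1/n)\ints$ with the poset of nonempty finite subsets of $(1/n)\ints$ of diameter $<1$ (which is correct: $\phi(0)$ is forced to be $\min A_\phi$, so the set determines the map, and a morphism exists iff the sets are nested, uniquely), your subcomplexes $K_n^{[a,b]}$ are exactly the paper's full subcategories $\calC_{[a,b)}$ up to closed-versus-half-open indexing; your gluing of $\sigma_b$ along $\sigma_{b-1}\cap\sigma_b$ is the paper's homotopy pushout square whose upper-left corner is $\calC_{[b-1,\,b-1/n)}$; and the passage to the filtered colimit is identical. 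What you add --- the face-poset/barycentric-subdivision picture and the observation that the nerve of the window cover is line-like rather than point-like --- is a transparent articulation of the repair of \cite[Theorem B.3]{nikolaus-scholze} that the paper performs implicitly by gluing only consecutive windows; it is not a different route, just a nicer way of seeing the same induction.

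There is, however, a genuine issue, which you share with (and in fact expose in) the paper's proof: both arguments tacitly assume $n \ge 2$. For $n=1$ your gluing face $\sigma_{b-1}\cap\sigma_b$ has $n-1=0$ vertices, i.e.\ is empty, exactly as the paper's corner $\calC_{[b-1,\,b-1/n)} = \calC_{[b-1,\,b-1)}$ is empty, so the pushout is a disjoint union and the induction collapses. This is not a repairable blemish of the proof but a failure of the statement at that object: an equivariant poset \emph{embedding} $(1/m)\ints \to (1/1)\ints = \ints$ forces $m=1$ (there is no integer strictly between $\phi(0)$ and $\phi(0)+1$), so the comma category over $(1/1)\ints$ is the discrete category on the set $\ints$ of translations --- in your picture, $K_1$ is an infinite discrete set of vertices --- which is not weakly contractible. (In $\Lambda_\infty$ this does not happen because non-injective order-preserving maps $(1/m)\ints \to \ints$ exist for every $m$; restricting to monomorphisms is precisely what creates the problem.) So homotopy initiality, read over every object of $\Paracyc^\fin_{>0}$ including $(1/1)\ints$, fails, and the case $n=1$ needs separate treatment --- for instance by first excising $(1/1)\ints$ via a homotopy finality argument for the full subcategory of paracycles with $n \ge 2$ (the relevant comma category over $(1/1)\ints$ being one of pointed paracycles, which one can hope to contract by a similar window argument), and only then invoking your window induction, which is sound for every $n \ge 2$. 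Your write-up should flag and handle this case explicitly rather than inherit the paper's silence on it.
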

\begin{proof}
For Quillen's Theorem A, see \cite[Theorem A]{quillen}. Let $(1/n)\ints \in \Paracyc^\fin_{>0}$, and consider the slice category $\Delta^\inj \downarrow (1/n)\ints$. For any $a,b\in (1/n)\ints$ with $a < b$, consider the full subcategory $\calC_{[a,b)}$ spanned by those maps $f : (1/k)\ints \to (1/n)\ints$ such that the image of $\{0,1/k,\dots,(k-1)/k\}$ is contained in the half-open interval $[a,b)$. Observe that every $f \in \calC$ is contained in $\calC_{[a,a+1)}$ for some $a \in (1/n)\ints$. We will induct on $b-a$ to show that $\calC_{[a,b)}$ is contractible. In the base case, if $b \leq a+1$, then $\calC_{[a,b)}$ is isomorphic to $\Delta^\inj \downarrow [m]$ where $m = n(b-a)$.  So $\calC_{[a,b)}$ has a terminal object and hence is contractible. For the inductive step, for any $a < b-1$, the square

\begin{equation*}
\begin{tikzcd}
\calC_{[b-1,b - 1/n)} \ar[r] \ar[d] & \calC_{[b-1,b)} \ar[d] \\
\calC_{[a,b - 1/n)} \ar[r] & \calC_{[a,b)}
\end{tikzcd}
\end{equation*}
is both a pushout and pullback, even after taking nerves, and it comprises cofibrations of simplical sets. Thus it is a homotopy pushout. By induction it is a homotopy pushout of contractible simplicial sets, and hence $\calC_{[a,b)}$ is contractible as well, completing the induction. Passing to the (filtered) colimit, $\cup_{a<b} \calC_{[a,b)} = \Delta^\inj \downarrow (1/n)\ints$, this category is weakly contractible as desired.
\end{proof}

\begin{cor}[\cite{cisinski,nikolaus-scholze}]\label{cor:paracyc-contr}
The category $\Paracyc^\fin_{>0}$ is weakly contractible.
\end{cor}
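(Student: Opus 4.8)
The plan is to deduce the contractibility of $\Paracyc^\fin_{>0}$ directly from \cref{lem:delta-paracyc}. That lemma tells us the canonical functor $\Delta^\inj \to \Paracyc^\fin_{>0}$ induces a homotopy equivalence of classifying spaces, so it suffices to prove that the semisimplex category $\Delta^\inj$ of nonempty finite linear orders is weakly contractible.

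The difficulty is that $\Delta^\inj$ has no terminal or initial object, so we cannot contract it onto an object directly. Writing $[n] = \{0 < \cdots < n\}$, the object $[0]$ is neither terminal (there is no embedding $[n] \to [0]$ for $n \geq 1$) nor initial (there are $n+1$ distinct embeddings $[0] \to [n]$). Instead I would exhibit a zig-zag of natural transformations connecting the identity functor to a constant functor, using the standard fact that any natural transformation between functors induces a homotopy between the induced maps of nerves.

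Concretely, consider the endofunctor $F \colon \Delta^\inj \to \Delta^\inj$ that appends a new maximal element, so $F([n]) = [n+1]$, sending an embedding $f \colon [m] \to [n]$ to the embedding $[m+1] \to [n+1]$ that agrees with $f$ on $\{0,\dots,m\}$ and carries the new top to the new top. There is a natural transformation $\id \Rightarrow F$ given at $[n]$ by the inclusion $[n] \hookrightarrow [n+1]$ onto the bottom $n+1$ elements, and a natural transformation $\const_{[0]} \Rightarrow F$ given at $[n]$ by the map $[0] \to [n+1]$ selecting the new top element. Naturality of the former holds because $F(f)$ restricts to $f$ on the bottom, and naturality of the latter holds precisely because $F$ preserves the top element. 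This yields a zig-zag $\id \Rightarrow F \Leftarrow \const_{[0]}$, whence $\lvert \Delta^\inj \rvert$ is homotopy equivalent to the one-point space selected by $\const_{[0]}$.

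Combining this with \cref{lem:delta-paracyc} gives $\lvert \Paracyc^\fin_{>0} \rvert \simeq \lvert \Delta^\inj \rvert \simeq \ast$, as claimed. The only real content beyond bookkeeping is the observation that $\Delta^\inj$ is contractible despite lacking a terminal object; this is the classical ``extra degeneracy''/join-with-a-point argument, which I expect to be the main (though entirely standard) step. Everything else is a formal consequence of the preceding lemma.
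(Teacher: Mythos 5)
Your proposal is correct and takes essentially the same route as the paper: both deduce the corollary from \cref{lem:delta-paracyc} together with the weak contractibility of $\Delta^\inj$, which the paper simply cites (\cite[1.7.24]{maltsiniotis}) while you prove directly via the zig-zag $\id \Rightarrow F \Leftarrow \const_{[0]}$ for the ``append a top element'' endofunctor $F$. Your naturality checks are valid (note $F(f)$ is indeed an embedding since the new top lands strictly above the image of $f$), so this is the paper's argument with the cited classical fact made explicit.
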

\begin{proof}
This follows from \cref{lem:delta-paracyc} and the fact that $\Delta^\inj$ is weakly contractible (\cite[1.7.24]{maltsiniotis}).
\end{proof}

\begin{cor}[\cite{cisinski,nikolaus-scholze,connes}]\label{cor:cyc-bs1}
The category $\Cyc^\fin_{>0}$ has the homotopy type of $BS^1 = K(\ints,2) = \cpx\pjv^\infty$.
\end{cor}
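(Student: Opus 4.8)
The plan is to realize $|\Cyc^\fin_{>0}|$ as the homotopy orbit space of the weakly contractible space $|\Paracyc^\fin_{>0}|$ under a free action of $S^1 = |\catdeloop\ints|$, and then use the standard computation that the homotopy orbits of a point under a free $S^1$-action is $BS^1 = K(\ints,2) = \cpx\pjv^\infty$. All three inputs are already in hand: the contractibility of $\Paracyc^\fin_{>0}$ (\cref{cor:paracyc-contr}), the identification of $\Cyc^\fin_{>0}$ as the strict $\catdeloop\ints$-quotient of $\Paracyc^\fin_{>0}$ with this quotient preserved by the nerve (\cref{lem:horb}), and the freeness of the $\catdeloop\ints$-action (\cref{def:paracyc}).

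First I would pass everything through the nerve. Since the nerve is product-preserving, the strict group structure on $\catdeloop\ints$ makes $N(\catdeloop\ints)$ a simplicial group, and its geometric realization is the topological group $|\catdeloop\ints| = K(\ints,1) \simeq S^1$. The categorical action of \cref{def:paracyc} then induces a simplicial, hence topological, action of this group on $N(\Paracyc^\fin_{>0})$. By \cref{lem:horb} the nerve carries the strict categorical quotient to the strict simplicial quotient, so that $N(\Cyc^\fin_{>0}) = N(\Paracyc^\fin_{>0}) / N(\catdeloop\ints)$, and upon realizing, $|\Cyc^\fin_{>0}| = |\Paracyc^\fin_{>0}| / S^1$.

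Next I would upgrade this strict quotient to the homotopy quotient. The freeness of the categorical action means each integer acts without fixed points on the homsets of $\Paracyc^\fin_{>0}$ by shifting; translating this to simplices shows the simplicial group $N(\catdeloop\ints)$ acts freely on $N(\Paracyc^\fin_{>0})$, so that $N(\Paracyc^\fin_{>0}) \to N(\Cyc^\fin_{>0})$ is a principal $N(\catdeloop\ints)$-bundle. After realization this is a principal $S^1$-bundle, whence the strict orbit space agrees with the Borel construction: $|\Cyc^\fin_{>0}| \simeq |\Paracyc^\fin_{>0}|_{hS^1}$. Combining with \cref{cor:paracyc-contr}, the base of a principal $S^1$-bundle with weakly contractible total space must be $BS^1 = K(\ints,2) = \cpx\pjv^\infty$, which gives the claim. (Equivalently, the resulting fibration sequence $S^1 \to |\Paracyc^\fin_{>0}| \to |\Cyc^\fin_{>0}|$ with contractible total space forces the base to be $BS^1$ via the long exact sequence of homotopy groups.)

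The main obstacle I anticipate is the middle step: verifying that the free \emph{categorical} $\catdeloop\ints$-action becomes, after nerving, a genuinely free simplicial group action whose strict quotient computes the homotopy quotient. This requires checking freeness degreewise and confirming that degenerate simplices introduce no fixed points, which is exactly the point where one must confirm that restricting from all of $\Lambda_\infty$ to the subcategory $\Paracyc^\fin_{>0}$ of monomorphisms does not disturb the argument used for $\Lambda_\infty \to \Lambda$ in \cite{nikolaus-scholze,cisinski}. Everything else is a formal consequence of the principal-bundle structure and \cref{cor:paracyc-contr}.
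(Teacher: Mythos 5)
Your proposal is correct and follows essentially the same route as the paper: the paper's proof of \cref{cor:cyc-bs1} likewise combines \cref{cor:paracyc-contr} with \cref{lem:horb} and the identification of $\catdeloop\ints$ with $S^1$ as a topological group, with your principal-bundle elaboration being exactly the standard argument (from \cite{nikolaus-scholze,cisinski}) that the paper leaves implicit. The middle step you flag as the main obstacle is precisely what \cref{lem:horb} was set up to handle, so no new verification is needed.
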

\begin{proof}
This follows from \cref{cor:paracyc-contr} and \cref{lem:horb}, since $\catdeloop\ints$ is equivalent as a topological group to $S^1$.
\end{proof}

\begin{cor}\label{cor:cyc-bs1-bis}
    The category $\Cyc_{>0}$ has the homotopy type of $BS^1 = K(\ints,2) = \cpx \pjv^\infty$.
\end{cor}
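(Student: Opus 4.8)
The plan is to reduce the computation for $\Cyc_{>0}$ to the one already established for $\Cyc^\fin_{>0}$ in \cref{cor:cyc-bs1}, using the Ind-presentation of \cref{lem:acc}. Concretely, I would show that the inclusion functor $i \colon \Cyc^\fin_{>0} \hookrightarrow \Cyc_{>0}$ induces a homotopy equivalence on classifying spaces; combined with \cref{cor:cyc-bs1} this immediately yields $\Cyc_{>0} \simeq BS^1 = \cpx\pjv^\infty$. This is just the familiar principle that the canonical inclusion $\calC \hookrightarrow \Ind(\calC)$ is homotopy cofinal, specialized to $\calC = \Cyc^\fin_{>0}$.

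To prove that $i$ is a homotopy equivalence I would invoke Quillen's Theorem A, exactly as in \cref{lem:delta-paracyc}. For each object $X \in \Cyc_{>0}$ I consider the comma category $i \downarrow X$, whose objects are pairs $(F,f)$ with $F$ a finite nonempty cyclically ordered set and $f\colon F \to X$ an embedding, and whose morphisms $(F,f) \to (F',f')$ are embeddings $g\colon F \to F'$ satisfying $f'g = f$. It then suffices to check that each $i \downarrow X$ is weakly contractible.

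The one genuine point of content is that $i \downarrow X$ is filtered, which is precisely a restatement of the fact (\cref{lem:acc}) that $X$ is the directed union of its finite cyclically ordered subsets. Nonemptiness is clear since $X \neq \varnothing$. Given two objects $(F_1,f_1),(F_2,f_2)$, the subset $S = f_1(F_1)\cup f_2(F_2) \subseteq X$ is again a finite cyclically ordered subset, and the object $(S,\iota)$, with $\iota\colon S \hookrightarrow X$ the inclusion, receives the evident maps from both $(F_1,f_1)$ and $(F_2,f_2)$, providing a cocone. Finally, since every structure map $f$ is injective, any two parallel morphisms in $i \downarrow X$ coincide, so the remaining condition for filteredness is trivially satisfied. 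Hence $i \downarrow X$ is filtered, and filtered categories have weakly contractible nerves.

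With all comma categories contractible, Quillen's Theorem A shows that $i$ induces a homotopy equivalence $\lvert \Cyc^\fin_{>0} \rvert \xrightarrow{\;\sim\;} \lvert \Cyc_{>0}\rvert$, and the conclusion follows from \cref{cor:cyc-bs1}. I do not anticipate any serious obstacle: the only thing requiring verification is the filteredness above, which is immediate from accessibility. The sole subtlety to keep in mind is that $\Cyc_{>0}$ is a large category, but this causes no difficulty, since each comma category $i \downarrow X$ is small — its objects correspond to finite subsets of the fixed set $X$ — so Quillen's Theorem A applies without modification.
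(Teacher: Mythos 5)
Your proof is correct and is essentially the paper's own argument: the paper likewise deduces the corollary from \cref{cor:cyc-bs1} and \cref{lem:acc} via the general fact $|\calC| \simeq |\Ind(\calC)|$, proved by Quillen's Theorem A together with the observation that each comma category $\calC \downarrow X$ is filtered and hence contractible. You merely spell out the filteredness check (nonemptiness, cocones via unions of images, and triviality of the parallel-morphism condition by injectivity) that the paper compresses into a parenthetical remark.
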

\begin{proof}
    This follows from \cref{cor:cyc-bs1} and \cref{lem:acc}, since for any category $\calC$ we have $|\calC| \simeq |\Ind(\calC)|$ (this follows from Quillen's Theorem A as for any $X \in \Ind(\calC)$, the category $\calC \downarrow X$ is filtered and hence contractible).
\end{proof}

\section{Dense cyclically ordered sets}\label{sec:dense}
In this section, we introduce the theory of dense cyclically ordered sets (\cref{def:dense}). We then compute the homotopy type of the category of models thereof (\cref{cor:maincor}).

\subsection{Completeness and quantifier elimination}

\begin{Def}\label{def:dense}
Let $T$ be the theory, in the language of cyclically ordered sets $\calL_\Cyc$, of a dense cyclically ordered set with at least two elements. More explicitly, the density condition is expressed as the following:
\[
\forall x,z \, (x\neq z\rightarrow \exists y \,R(x,y,z)).
\]
\end{Def}
The following follows from the back-and-forth argument in dense linear orders without endpoints.
\begin{fact}\label{fact:qe}
    $T$ has a unique countable model up to isomorphism, which is $\rats/\ints$ with the obvious cyclic ordering given by $<$ in $\rats$. Moreover, $T$ admits quantifier elimination in $\mathcal{L}_\Cyc$. 
\end{fact}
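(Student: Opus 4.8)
The plan is to run the classical back-and-forth argument for dense linear orders, transported to the cyclic setting by \emph{cutting} the circle at a point. Fix a model $M \models T$ and a point $p \in M$. As noted just after \cref{def:cycord}, the relation $y <_p z :\Leftrightarrow R(p,y,z)$ is a strict linear order on $M \setminus \{p\}$ (this is exactly Asymmetry, Transitivity and Connectedness restricted to the complement of $p$). My first step is to check that $(M \setminus \{p\}, <_p)$ is a \emph{dense} linear order \emph{without endpoints}. The no-endpoints part is immediate from the density axiom of \cref{def:dense}: applying it to the pair $(p,y)$ produces some $w <_p y$, and applying it to $(y,p)$ produces, after using Cyclicity, some $w'$ with $y <_p w'$. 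Density of $<_p$ is the one genuinely fiddly point: given $y <_p z$, i.e. $R(p,y,z)$, the density axiom applied to $(y,z)$ yields $w$ with $R(y,w,z)$, and then a short manipulation of Transitivity and Cyclicity (for instance, from $R(z,p,y)$ and $R(z,y,w)$ one obtains $R(z,p,w)$, hence $R(p,w,z)$) shows $y <_p w <_p z$.

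Conversely, I will record that the full ternary relation $R$ is \emph{determined} by the cut: for three distinct points avoiding $p$, $R(x,y,z)$ holds if and only if $x,y,z$ occur in cyclic rotation according to $<_p$ (that is, $x <_p y <_p z$, or $y <_p z <_p x$, or $z <_p x <_p y$), while triples involving $p$ are governed by the defining equivalence together with Cyclicity. Granting this dictionary, $\aleph_0$-categoricity is immediate: given two countable models $M, N \models T$, choose $p \in M$ and $q \in N$; by the previous paragraph $(M\setminus\{p\}, <_p)$ and $(N \setminus\{q\}, <_q)$ are countable dense linear orders without endpoints, hence isomorphic by Cantor's theorem, and extending such an isomorphism by $p \mapsto q$ yields a bijection preserving the cut orders and therefore, by the reconstruction just described, preserving $R$. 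Since $\rats/\ints$ is visibly a countable model of $T$, it is \emph{the} countable model.

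For quantifier elimination I will use the substructure-extension criterion (see e.g. \cite[\S 3.2]{tent-ziegler}): it suffices to show that any isomorphism $f \colon A \to B$ between finite substructures of models $M, N \models T$, together with a point $a \in M \setminus A$, extends to some $b \in N$. Since $\calL_\Cyc$ has no function symbols, substructures are just subsets with the induced relation, and $R$ is vacuous on sets of size $\le 2$; the content is therefore purely the placement of $a$ within an arc. The finite set $A$ cuts the circle into finitely many open arcs, the point $a$ lies in a unique one, and applying the density axiom inside $N$ (cutting at an endpoint of the corresponding arc of $B$, or directly when $\lvert A\rvert \le 2$) produces a $b \in N$ realizing the same arc; one checks, exactly as in the reconstruction step, that $f \cup \{a \mapsto b\}$ preserves $R$. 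Crucially, because density is part of $T$, the witness $b$ can always be found \emph{inside} $N$, so the hypothesis of the criterion holds with no passage to a saturated extension, and QE follows.

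The main obstacle is bookkeeping rather than conceptual: the cyclic-order axioms must be manipulated carefully to (i) upgrade the cyclic density axiom to density-without-endpoints of each cut $<_p$, and (ii) verify that preserving a single cut order forces preservation of the entire ternary relation $R$. Both points reduce, via Cyclicity and Transitivity, to the elementary fact that a cyclic order on a set is the same data as a linear order on the complement of a chosen point. Once this dictionary is in place, every assertion of \cref{fact:qe} becomes the corresponding classical statement about dense linear orders without endpoints.
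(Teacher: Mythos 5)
Your proposal is correct and matches the paper's intent exactly: the paper offers no detailed proof, stating only that the fact ``follows from the back-and-forth argument in dense linear orders without endpoints,'' and your argument---cutting the circle at a point to obtain a DLO without endpoints, invoking Cantor's theorem for $\aleph_0$-categoricity, and running the one-point extension criterion for quantifier elimination with witnesses supplied by density---is precisely that argument carried out in full.
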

We will use $\DCyc$ to denote the category of models of $T$ where the morphisms are elementary embeddings. By quantifier elimination, the arrows are exactly the $\calL_\Cyc$-embeddings. In other words, $\DCyc$ is the full subcategory of $\Cyc_{>0}$ whose objects consist of the dense ones.

\subsection{The homotopy type of dense cyclically ordered sets}

\begin{Def}
Given a cyclically ordered set $C$, let $T(C) = C \times \{0,1\}$. We identify $C$ with $C \times \{0\} \subseteq T(C)$; call the inclusion map $\iota_C: C \to T(C)$. Give $T(C)$ the cyclic structure where the ordering is such that $(c,1)$ is placed in the cut immediately after $c$ with respect to $C$. More precisely, this means that $R((x,0),(x,1),y)$ for any $y$ distinct from $(x,0)$, $(x,1)$. If $f:C\to D$ is an embedding of cyclically ordered sets, $T(f):T(C)\to T(D)$ is the map that sends $(c,i)$ to $(f(c),i)$ for $i=0,1$.
\end{Def}
The following lemma is immediate.
\begin{lem}\label{lem:T}
$T(C)$ is a cyclically ordered set, and $C \to T(C)$ is an embedding of cyclically ordered sets. The construction $C \mapsto T(C)$ is functorial, and the embedding $C \to T(C)$ is natural. Moreover, if $C$ is nonempty then $T(C)$ has at least two elements.
\end{lem}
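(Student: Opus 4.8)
The plan is to verify each of the four assertions in \cref{lem:T} directly from the definitions, treating them in the order they are stated. The main content is checking that $T(C)$ satisfies the four axioms of \cref{def:cycord}; the remaining claims (embedding, functoriality, naturality, nonemptiness) are then largely bookkeeping. I expect the verification of transitivity and asymmetry for $T(C)$ to be the only step requiring genuine care, since these involve four-element configurations where the ``doubled'' points $(x,1)$ must be ordered consistently with the cut immediately after $(x,0)$.

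\begin{proof}
First I would pin down the ternary relation $R$ on $T(C) = C \times \{0,1\}$ precisely. The prescription is that $(x,0) < (x,1) < y$ for every $y \neq (x,0),(x,1)$ in the cut immediately after $(x,0)$; equivalently, one may define a total order on the complement of any chosen basepoint by declaring that $(c,0)$ and $(c,1)$ occupy adjacent positions with $(c,0)$ first, inheriting the relative order of the $c$'s from $C$. Using the remark following \cref{def:cycord}, that the non-cyclicity axioms simply assert $R(x,-,-)$ is a strict linear order on the complement of $x$, it suffices to check that for each basepoint this induced relation is a strict linear order (asymmetry, transitivity, connectedness) and that cyclicity holds. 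The linear-order axioms follow because we are inserting each $(c,1)$ immediately after $(c,0)$ into the strict linear order on $C$ relative to the appropriate cut, which manifestly yields a strict linear order; the only subtlety is confirming the definition is consistent when the basepoint is itself one of the doubled points $(x,0)$ or $(x,1)$, which one checks by direct inspection of the adjacent pair. Cyclicity holds because the construction is symmetric under cyclic rotation of the three arguments, being defined pointwise from the cyclic structure on $C$.

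Next I would verify that $\iota_C : C \to T(C)$, $c \mapsto (c,0)$, is an embedding of cyclically ordered sets. Injectivity is clear, and the biconditional $R_C(a,b,c) \leftrightarrow R_{T(C)}((a,0),(b,0),(c,0))$ holds because the points $(c,1)$ were inserted so as not to disturb the relative cyclic order of the $(c,0)$'s. For functoriality, I would observe that $T(f)(c,i) = (f(c),i)$ is well-defined and preserves $R$: since $f$ preserves the cyclic order on $C$ and the doubled points are inserted by the same uniform rule on both sides, $T(f)$ respects $R_{T(C)}$ and $R_{T(D)}$, and injectivity of $T(f)$ follows from that of $f$; compatibility with identities and composites is immediate from the formula. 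Naturality of $\iota$ is the commutativity of the square $T(f) \circ \iota_C = \iota_D \circ f$, which holds on the nose since both sides send $c$ to $(f(c),0)$.

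Finally, if $C$ is nonempty, choose $c \in C$; then $(c,0)$ and $(c,1)$ are two distinct elements of $T(C)$, so $T(C)$ has at least two elements. The main obstacle, as noted, is simply the careful handling of the cut condition when the basepoint coincides with a doubled point, but this is a finite case-check rather than a conceptual difficulty, which is why the lemma is stated as immediate.
\end{proof}
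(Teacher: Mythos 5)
Your proof is correct, and it coincides in approach with the paper, which simply declares the lemma immediate: you carry out precisely the routine verification the paper leaves implicit, namely checking the axioms of \cref{def:cycord} basepoint-by-basepoint via the observation that $R(x,-,-)$ is a strict linear order on the complement of $x$, with the doubled points inserted adjacently, followed by the bookkeeping for the embedding, functoriality, naturality, and the two-element claim.
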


\begin{Def}
For a cyclically ordered set $C$, define $T^\infty(C) = \cup_n T^n(C)$ where the inclusion maps are given by $\iota$; continue to call the inclusion $\iota: C \to T^\infty(C)$. $T^\infty(f)$ is also defined accordingly.
\end{Def}

\begin{lem}\label{lem:Tinf}
 $T^\infty(C)$ is a cyclically ordered set, and $C \to T^\infty(C)$ is an embedding of cyclically ordered sets. The construction $C \mapsto T^\infty(C)$, $f\mapsto T^\infty(f)$ is functorial, and the embedding $\iota: C \to T^\infty(C)$ is natural. Moreover, $T^\infty(C)$ is an object in $\DCyc$ if $C$ is nonempty.
\end{lem}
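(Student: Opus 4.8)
The plan is to recognize $T^\infty(C) = \bigcup_n T^n(C)$ as a sequential directed colimit along the embeddings $\iota : T^n(C) \to T^{n+1}(C)$ supplied by \cref{lem:T}, and then to reduce every claim except density to facts already established, reserving the real work for the density axiom.

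First I would dispatch the structural claims. Since directed colimits in $\Cyc_{>0}$ are computed as unions (as in \cref{lem:acc}), the union $T^\infty(C)$ is again a cyclically ordered set and each $T^n(C) \to T^\infty(C)$ is an embedding; in particular $\iota : C = T^0(C) \to T^\infty(C)$ is an embedding. For functoriality and naturality I would iterate \cref{lem:T}: each $T^n$ is a composite of the endofunctor $T$ with itself, hence functorial, and the natural transformation $\iota : \id \Rightarrow T$ whiskers to natural transformations $T^n \Rightarrow T^{n+1}$, making $(T^n)_n$ a sequential diagram of endofunctors. Taking the pointwise colimit yields the functor $T^\infty$ (with $T^\infty(f) = \bigcup_n T^n(f)$) together with the natural inclusion $\iota : \id \Rightarrow T^\infty$ as the colimit cocone. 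The ``at least two elements'' clause is immediate from \cref{lem:T}, since for $C$ nonempty $T(C) \subseteq T^\infty(C)$ already has two elements, and nonemptiness follows a fortiori.

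The hard part will be verifying density, where the specific combinatorics of $T$ enter. Given distinct $x, z \in T^\infty(C)$, I would choose $n$ large enough that both lie in $D := T^n(C)$, so that under $D \hookrightarrow T(D) = T^{n+1}(C)$ they are identified with $(x,0), (z,0)$. I would then produce the witness $y := (x,1)$, the point placed in the cut immediately after $x$ in the construction of $T(D)$: the defining relation $R((x,0),(x,1),w)$ holds for all $w \neq (x,0),(x,1)$, and I instantiate it at $w = (z,0)$. The one point needing care is that this is a legitimate instantiation, i.e.\ that $z \neq (x,0)$ and $z \neq (x,1)$; the former holds because $x \neq z$, and the latter because $z = (z,0)$ has second coordinate $0$ while $(x,1)$ has second coordinate $1$. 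Since embeddings preserve $R$, the relation $R(x,(x,1),z)$ persists in $T^\infty(C)$, and $(x,1) \in T^{n+1}(C) \subseteq T^\infty(C)$ is a strictly-between element distinct from both $x$ and $z$. This establishes density, and together with the previous paragraph shows $T^\infty(C) \in \DCyc$ whenever $C$ is nonempty.
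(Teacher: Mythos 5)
Your proposal is correct and follows essentially the same route as the paper: reduce the structural claims to \cref{lem:T} and the union description of directed colimits, then verify density by passing from $T^n(C)$ to $T^{n+1}(C)$ and using the inserted point $(x,1)$. In fact your version is slightly cleaner than the paper's, which first asks whether a witness already exists in $T^n(C)$ before falling back on $(x,1)$ --- a case split your observation that $R((x,0),(x,1),(z,0))$ always holds renders unnecessary.
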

\begin{proof}
 The naturality follows from~\cref{lem:T}. So it suffices to show that $T^\infty(C)$ is dense. Let $x \neq y \in T^\infty(C)$. By construction, there is $n\in\nats$ such that $x, y \in T^n(C)$. If there is $z \in T^n(C)$ with $R(x,z,y)$, then we are done. Otherwise, observe that in $T^{n+1}(C)$, $R((x,0), (x,1), (y,0))$ holds.
\end{proof}

\begin{cor}\label{cor:fraisse-heq}
The functor $T^\infty : \Cyc_{>0} \to \DCyc$ and the inclusion functor $i : \DCyc \to \Cyc_{>0}$ induce inverse homotopy equivalences after passing to classifying spaces.
\end{cor}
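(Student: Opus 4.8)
The plan is to show that the two functors $T^\infty$ and $i$ induce mutually inverse homotopy equivalences by exhibiting natural transformations relating their composites to the respective identity functors, and then invoking the standard fact that a natural transformation between functors induces a homotopy between the maps of classifying spaces. The key observation is that a zig-zag of natural transformations connecting a composite $F$ to the identity functor suffices: any such natural transformation gives a simplicial homotopy after applying the nerve, hence a homotopy of classifying spaces.

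First I would treat the composite $i \circ T^\infty : \Cyc_{>0} \to \Cyc_{>0}$. By~\cref{lem:Tinf}, the inclusion $\iota : C \to T^\infty(C)$ is a natural transformation from the identity functor $\id_{\Cyc_{>0}}$ to $i \circ T^\infty$. Since a natural transformation between two functors $\calC \to \calD$ induces a homotopy between the induced maps $|\calC| \to |\calD|$, this shows that $|i \circ T^\infty| \simeq \id_{|\Cyc_{>0}|}$. Next I would treat the composite $T^\infty \circ i : \DCyc \to \DCyc$. The same natural transformation $\iota$, restricted to dense objects, provides a natural map from $\id_{\DCyc}$ to $T^\infty \circ i$; note that $T^\infty(C)$ is automatically dense when $C$ is (indeed whenever $C$ is nonempty, by~\cref{lem:Tinf}), so this composite genuinely lands in $\DCyc$ and the transformation is well-defined there. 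Again by the naturality principle, $|T^\infty \circ i| \simeq \id_{|\DCyc|}$.

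Combining the two homotopies, the induced maps $|T^\infty| : |\Cyc_{>0}| \to |\DCyc|$ and $|i| : |\DCyc| \to |\Cyc_{>0}|$ are mutually inverse up to homotopy, which is exactly the assertion that they are inverse homotopy equivalences. The main technical point to be careful about is simply that the morphisms appearing in the natural transformation $\iota$ are genuine morphisms in the relevant categories, i.e. $\calL_\Cyc$-embeddings; this is guaranteed by~\cref{lem:T} and~\cref{lem:Tinf}, which assert that each $\iota_C$ is an embedding of cyclically ordered sets and that these are natural. I expect the only real obstacle to be confirming the well-definedness of $T^\infty \circ i$ as an endofunctor of $\DCyc$ — that is, that it preserves density — but this is already supplied by~\cref{lem:Tinf}.

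\begin{proof}
By~\cref{lem:Tinf}, the embedding $\iota : C \to T^\infty(C)$ is natural in $C$, giving a natural transformation from the identity functor of $\Cyc_{>0}$ to $i \circ T^\infty$. A natural transformation of functors induces a homotopy between the maps of classifying spaces, so $|i \circ T^\infty| \simeq \id_{|\Cyc_{>0}|}$. Restricting $\iota$ to $\DCyc$ yields a natural transformation from $\id_{\DCyc}$ to $T^\infty \circ i$ (which is well-defined as an endofunctor of $\DCyc$, again by~\cref{lem:Tinf}), so likewise $|T^\infty \circ i| \simeq \id_{|\DCyc|}$. Hence $|T^\infty|$ and $|i|$ are mutually inverse homotopy equivalences.
\end{proof}
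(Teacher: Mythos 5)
Your proof is correct and takes essentially the same approach as the paper, which likewise just exhibits the natural transformations $\iota : \id \Rightarrow iT^\infty$ and $\iota : \id \Rightarrow T^\infty i$ and invokes the standard fact that a natural transformation induces a homotopy of classifying spaces. Your additional check that $T^\infty \circ i$ is well-defined as an endofunctor of $\DCyc$ (via \cref{lem:Tinf}) is a reasonable point the paper leaves implicit.
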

\begin{proof}
We have natural transformations $\iota: \id \Rightarrow iT^\infty$ and $\iota: \id \Rightarrow T^\infty i$.
\end{proof}

\begin{cor}\label{cor:maincor}
We have $|\DCyc| \simeq BS^1 = K(\ints,2) = \cpx \pjv^\infty$. In particular, $\DCyc$ is an elementary class whose classifying space is not aspherical.
\end{cor}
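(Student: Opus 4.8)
The plan is to deduce \cref{cor:maincor} directly by chaining the results already established, reducing everything to the contractibility computation for the paracyclic category. The target statement asserts that $|\DCyc| \simeq BS^1$, so the first move is to invoke \cref{cor:fraisse-heq}, which gives that the inclusion $i : \DCyc \to \Cyc_{>0}$ induces a homotopy equivalence of classifying spaces (with homotopy inverse induced by $T^\infty$). This immediately yields $|\DCyc| \simeq |\Cyc_{>0}|$, so it suffices to identify the right-hand side.

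Next I would apply \cref{cor:cyc-bs1-bis}, which states precisely that $|\Cyc_{>0}| \simeq BS^1 = K(\ints, 2) = \cpx\pjv^\infty$. Combining the two equivalences gives the desired homotopy equivalence $|\DCyc| \simeq BS^1$, completing the main identification. The final sentence of the corollary is then essentially an observation: $\DCyc$ is the category of models of the first-order theory $T$ from \cref{def:dense}, hence an elementary class, and since $\pi_2(BS^1) = \pi_2(\cpx\pjv^\infty) = \ints \neq 0$, its classifying space carries a nontrivial higher homotopy group and is therefore not aspherical. This is exactly the negative answer to the first part of \cref{question} promised in the introduction.

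There is no real obstacle at the level of \cref{cor:maincor} itself: the proof is a two-line composition of earlier results, and the genuine content lives upstream. The substantive work is \cref{cor:cyc-bs1-bis} (the homotopy type of $\Cyc_{>0}$), which in turn rests on \cref{cor:cyc-bs1} for the finite subcategory $\Cyc^\fin_{>0}$, and ultimately on \cref{cor:paracyc-contr} (contractibility of $\Paracyc^\fin_{>0}$) together with \cref{lem:horb} (the identification of $\Cyc^\fin_{>0}$ as the $\catdeloop\ints$-orbit of $\Paracyc^\fin_{>0}$). If I had to flag where care is needed, it is in the fact—used in \cref{cor:fraisse-heq}—that a natural transformation $\id \Rightarrow iT^\infty$ yields a homotopy between the induced maps on classifying spaces; this is the standard principle that naturally isomorphic (indeed naturally \emph{related} via any natural transformation) functors induce homotopic maps after taking nerves, but one should make sure the two natural transformations $\iota$ exhibited in both directions really do witness $i$ and $T^\infty$ as mutually inverse up to homotopy. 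Given that all of this is assembled in the preceding results, the proof of \cref{cor:maincor} is simply: apply \cref{cor:fraisse-heq} to get $|\DCyc| \simeq |\Cyc_{>0}|$, then apply \cref{cor:cyc-bs1-bis}, and finally note the nonvanishing of $\pi_2$.
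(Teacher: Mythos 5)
Your proof is correct and matches the paper's own argument exactly: the paper deduces \cref{cor:maincor} by combining \cref{cor:cyc-bs1-bis} and \cref{cor:fraisse-heq} in precisely the way you describe, with the observation about $\pi_2 \neq 0$ handling the asphericity claim. Your additional remarks about where the real content lives upstream are accurate but not part of the proof itself.
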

\begin{proof}
$BS^1 = |\Cyc_{>0}| \simeq |\DCyc|$ by \cref{cor:cyc-bs1-bis} and \cref{cor:fraisse-heq} respectively.
\end{proof}

\begin{rmk}
If we tried to include the empty cyclically ordered set into $\Cyc^\fin_{>0}$, the functor $T^\infty$ would not turn the empty one into a nonempty one. So we wouldn't be able to relate our category homotopically to a category where emptiness is ruled out by the theory.
\end{rmk}
\begin{rmk}\label{rmk:boolean}
Dense cyclically ordered sets are canonical examples where the $0$-definable set $x=x$ forks over $\varnothing$. However, the exact relationship between forking and the higher homotopy remains mysterious. Another canonical example of forking $\neq$ dividing comes from the theory of infinite atomic boolean algebras $T$. It has quantifier elimination in $\mathcal{L}=\{\wedge,\vee,^c,0,1,(A_n)_{n\in \nats_{>0}}\}$ where each $A_n$ is to be interpreted as the predicate for $n$-atoms: the join of $n$ distinct atoms. An element  $a$ is \defterm{infinite} if it is above infinitely many atoms, and \defterm{coinfinite} if $a^c$ is infinite. Note that any $a,b$ that are infinite and coinifinite have the same type over $\varnothing$. The predicate $A_1(x)$ does not divide but forks since one can find an infinite and coinfinite element $a$ $A_1(x)\rightarrow x\wedge a=x$ or $x\wedge a^c=x$ and the formula $x\wedge a=x$ divides for any infinite and coinfinite $a$. On the other hand, the classifying space of the category of models of infinite atomic boolean algebra $\Mod(T)$ is contractible. To see this, note that there is a functor $F$ from the category of infinite sets with embeddings to $\Mod(T)$ by sending each set $A$ to $\mathcal{P}^\fin(A)$ and associate to each map $f:A \to B$ the map $\mathcal{P}^\fin(A)\to \mathcal{P}^\fin(B)$ induced by $f$ on the atoms. There is also the ``forgetful" functor $U$ from $\Mod(T)$ to the category of infinite sets by sending each atomic boolean algebra to its set of atoms. There are obvious natural transformations from the identity to $UF$, and from the identity to $FU$, so and thus the classifying spaces of these categories are homotopy equivalent.

\end{rmk}

\bibliographystyle{alpha}
\bibliography{bibliography}

\begin{thebibliography}{CCY21}

\bibitem[CCY21]{lascar-homotopy}
Tim Campion, Greg Cousins, and Jinhe Ye.
\newblock Classifying spaces and the {L}ascar group.
\newblock {\em J. Symb. Log.}, 86(4):1396--1431, 2021.

\bibitem[Cis06]{cisinski}
Denis-Charles Cisinski.
\newblock Les pr\'{e}faisceaux comme mod\`eles des types d'homotopie.
\newblock {\em Ast\'{e}risque}, (308):xxiv+390, 2006.

\bibitem[Con83]{connes}
Alain Connes.
\newblock Cohomologie cyclique et foncteurs {${\rm Ext}^n$}.
\newblock {\em C. R. Acad. Sci. Paris S\'{e}r. I Math.}, 296(23):953--958,
  1983.

\bibitem[CY21]{higher-aec}
Tim Campion and Jinhe Ye.
\newblock Homotopy types of abstract elementary classes.
\newblock {\em J. Pure Appl. Algebra}, 225(5):Paper No. 106461, 9, 2021.

\bibitem[HH84]{harrington}
Victor Harnik and Leo Harrington.
\newblock Fundamentals of forking.
\newblock {\em Ann. Pure Appl. Logic}, 26(3):245--286, 1984.

\bibitem[KP97]{kim-pillay}
Byunghan Kim and Anand Pillay.
\newblock Simple theories.
\newblock {\em Ann. Pure Appl. Logic}, 88(2-3):149--164, 1997.

\bibitem[Las82]{lascar}
Daniel Lascar.
\newblock On the category of models of a complete theory.
\newblock {\em J. Symbolic Logic}, 47(2):249--266, 1982.

\bibitem[Mal05]{maltsiniotis}
Georges Maltsiniotis.
\newblock La th{\'e}orie de l'homotopie de {G}rothendieck.
\newblock {\em Ast{\'e}risque}, 301, 2005.

\bibitem[NS18]{nikolaus-scholze}
Thomas Nikolaus and Peter Scholze.
\newblock On topological cyclic homology.
\newblock {\em Acta Math.}, 221(2):203--409, 2018.

\bibitem[Qui73]{quillen}
Daniel Quillen.
\newblock Higher algebraic {$K$}-theory. {I}.
\newblock In {\em Algebraic {$K$}-theory, {I}: {H}igher {$K$}-theories ({P}roc.
  {C}onf., {B}attelle {M}emorial {I}nst., {S}eattle, {W}ash., 1972)}, Lecture
  Notes in Math., Vol. 341, pages 85--147. Springer, Berlin, 1973.

\bibitem[TZ12]{tent-ziegler}
Katrin Tent and Martin Ziegler.
\newblock {\em A course in model theory}, volume~40 of {\em Lecture Notes in
  Logic}.
\newblock Association for Symbolic Logic, La Jolla, CA; Cambridge University
  Press, Cambridge, 2012.

\end{thebibliography}

\end{document}